\newtheorem{theorem}{Theorem}%[section]
\newtheorem{lemma}[theorem]{Lemma}
\newtheorem{observation}[theorem]{Observation}
\theoremstyle{definition}
\newtheorem{definition}[theorem]{Definition}
\theoremstyle{remark}
\newtheorem{remark}[theorem]{Remark}
\def\f2{\mathbb{F}_2}
\def\lip{\hskip0.02cm{\rm Lip}\hskip0.01cm}
\newcommand{\diam}{{\rm diam}\hskip0.03cm}
\begin{document}
\title{\LARGE{Low-distortion embeddings of graphs with large girth}}

\author{Mikhail~I.~Ostrovskii\footnote{Participant, NSF supported Workshop in Analysis and Probability, Texas A \&\ M
University}}

\date{\today}
\maketitle

\noindent{\bf Abstract.} The main purpose of the paper is to
construct a sequence of graphs of constant degree with
indefinitely growing girths admitting embeddings into $\ell_1$
with uniformly bounded distortions. This result solves the problem
posed by N.~Linial, A.~Magen, and A.~Naor (2002).
\medskip

\noindent{\bf 2010 Mathematics Subject Classification:} Primary:
46B85; Secondary: 05C12, 54E35

\begin{large}

\begin{definition}\label{D:lip} Let $C<\infty$. A map $f:
(X,d_X)\to (Y,d_Y)$ between two metric spaces is called
$C$-\emph{Lipschitz}\index{$C$-Lipschitz map} if \[\forall u,v\in
X\quad d_Y(f(u),f(v))\le Cd_X(u,v).\] A map $f$ is called
\emph{Lipschitz}\index{Lipschitz map} if it is $C$-Lipschitz for
some $C<\infty$. For a Lipschitz map $f$ we define its
\emph{Lipschitz constant}\index{Lipschitz constant} by
\[\lip f:=\sup_{d_X(u,v)\ne 0}\frac{d_Y(f(u),
f(v))}{d_X(u,v)}.\]

A map $f:X\to Y$ is called a \emph{$C$-bilipschitz
embedding}\index{$C$-bilipschitz embedding} if there exists $r>0$
such that
\begin{equation}\label{E:MapDist}\forall u,v\in X\quad rd_X(u,v)\le
d_Y(f(u),f(v))\le rCd_X(u,v).\end{equation} A \emph{bilipschitz
embedding}\index{bilipschitz embedding} is an embedding which is
$C$-bilipschitz for some $C<\infty$. The smallest constant $C$ for
which there exist $r>0$ such that \eqref{E:MapDist} is satisfied
is called the \emph{distortion}\index{distortion} of $f$. (It is
easy to see that such smallest constant exists.)
\medskip

The infimum of distortions of all embeddings of a finite metric
space $X$ into the Banach space $\ell_1$ is called the {\it
$\ell_1$ distortion} of $X$ and is denoted $c_1(X)$.
\end{definition}

The $\ell_1$ distortion of finite metric spaces plays an important
role in the theory of approximation algorithms, see \cite{Lin02},
\cite{LLR95}, \cite{Mat02}, \cite{Mat05}, and \cite{Nao10}.
\medskip

Our main purpose is to solve the following problem suggested in
\cite[p.~393]{LMN02} and repeated in \cite[Open Problem 7]{Lin02}
and  \cite[Problem 2.3]{Mat10}: Does there exist a sequence of
$k$-regular graphs, $k\ge 3$, with indefinitely growing girths and
uniformly bounded $\ell_1$ distortions? (All graphs mentioned in
this paper are endowed with their shortest path distance.) We are
going to show that such sequences exist.\medskip

The construction of this paper is inspired by the paper
\cite{AGS11+}. Recall that the {\it girth} $g(G)$ of a graph $G$
is the length of a shortest cycle in $G$. We start with a sequence
of $k$-regular graphs $\{G_n\}$ with indefinitely increasing
girths $g(G_n)$, such that
\begin{equation}\label{E:GirthMax}g(G_n)\ge c\,\diam(G_n)\end{equation} for some absolute constant
$c$. Existence of such sequences of graphs is known for long time,
see \cite[Chapter III, \S 1]{Bol78}. In the 1980s the constants
involved in the construction were significantly improved, see
\cite{Mar82}, \cite{Imr84}, and \cite{LPS88}. For each graph $G$
in the sequence $\{G_n\}$ we consider its lift $\widetilde G$ in
the sense of the papers \cite{AL06} and \cite{DL06}. (We would
like to warn the reader that somewhat different terminology (graph
covers, voltage graphs) is used in other publications on the
topic, such as \cite{AGS11+}, \cite{GT77}, and \cite{GT87}.) The
particular version of the lift which we use is the same as the
lift used in \cite{AGS11+}, but it is applied to a different
sequence of graphs. Also we need somewhat stronger estimates than
those which were sufficient for \cite{AGS11+}. Another difference
of our presentation from the presentation in \cite{AGS11+} is that
we try to keep the presentation as elementary as possible, without
assuming any topological and group-theoretical background of the
reader. We use only some basic notions of graph theory and the
definition of the space $\ell_1$. We hope that our
graph-theoretical terminology is standard, readers can find all
unexplained terminology in \cite{BM08}.

\begin{definition}\label{L:lift} Let $L$ be a finite set.
A {\it lift} $\widetilde G$ of a graph $G=(V(G), E(G))$ is a graph
with vertex set $V(\widetilde G)=V(G)\times L$. The edge set
$\widetilde G$ is the union of perfect matchings corresponding to
edges of $E(G)$. The matching corresponding to an edge $uv$
matches $\{u\}\times L$ with $\{v\}\times L$.
\end{definition}

Definition \ref{L:lift} immediately implies that there are
well-defined projections $E(\widetilde G)\to E(G)$ and
$V(\widetilde G)\to V(G)$: edges of the matching corresponding to
$uv$ are projected onto $uv$ and vertices of $\{u\}\times L$ are
projected onto $u$. We denote both of the projections by $\pi$. It
is clear from the definition that the degrees of all vertices in
$\widetilde G$ whose projection in $G$ is $u$ are the same as the
degree of $u$. In particular, any lift of a $k$-regular graph is
$k$-regular.\medskip

\begin{remark}\label{R:WalkLifting} It is easy to see that for each walk $\{e_i\}_{i=1}^n$ in $G$
and each vertex $\widetilde u\in V(\widetilde G)$ of the form
$\widetilde u=(u,\ell)$ with $\ell\in L$ and $u$ being the initial
vertex of the walk $\{e_i\}_{i=1}^n$; there is a uniquely
determined {\it lifted walk} $\{\widetilde e_i\}_{i=1}^n$ in
$\widetilde G$ for which $\pi(\widetilde e_i)=e_i$ and $\widetilde
u$ is the initial vertex.
\end{remark}

\begin{remark}\label{R:backtrack}
It is clear that if a walk in $G$ has an edge $e$ which is
backtracked (that is, the walk contains two consecutive edges
$e$), then the corresponding edge in the lifted walk is also
backtracked.
\end{remark}

Remark \ref{R:backtrack} implies that the projection of a cycle in
$\widetilde G$ to $G$ cannot be such that its edges induce in $G$
a subgraph having vertices of degree $1$. In particular, the graph
induced by edges of the projection of a cycle in $\widetilde G$
contains cycles in $G$. This immediately implies $g(\widetilde
G)\ge g(G)$.
\medskip

We apply the lift construction to the graphs $\{G_n\}$ mentioned
above. The fact that we get $k$-regular graphs with indefinitely
increasing girths follows immediately from the observations which
we just made. It remains to specify lifts for which there are
suitable estimates for $\ell_1$ distortions of the obtained
graphs. The bounds for the distortions which we get are in terms
of the constant $c$ in \eqref{E:GirthMax}.
\medskip

For each $G\in\{G_n\}_{n=1}^\infty$ we do the following. We pick a
spanning tree $T$ in $G$ and let $S$ be the set of edges of $G$
which are not in $T$. We let $L$ to be the set $\{0,1\}^S$, so
each element of $L$ can be regarded as a $\{0,1\}$-valued function
on $S$. For each $uv\in E(G)$ we need to specify a perfect
matching of $u\times L$ and $v\times L$. To specify the perfect
matching it suffices, for each edge in $E(G)$, to pick a bijection
of the set $L$. We do this in the following way:

\begin{itemize}

\item If $e\in E(T)$ (that is, if edge is in the spanning tree
which we selected), then the corresponding bijection is the
identical mapping on $L$.

\item If $e\in S$, then the bijection maps each function $f$ on
$S$ to the function $h$, which has the same values as $f$
everywhere except the edge $e$, and on the edge $e$ its value is
the other one (recall that we consider $\{0,1\}$-valued
functions).

\end{itemize}

We denote the graphs obtained from $\{G_n\}$ using such lifts by
$\{\widetilde G_n\}$. The following theorem is the main result of
this paper.

\begin{theorem}\label{T:main} $c_1(\widetilde G_n)=O(1)$.
\end{theorem}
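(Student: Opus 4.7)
My plan is to build an explicit cut-metric embedding $\phi$ of $V(\widetilde G)$ into $\ell_1$ and verify that its distortion is controlled by the constant $c$ of \eqref{E:GirthMax}. For each tree edge $e^*\in E(T)$, removal of $e^*$ splits $T$ into sides $X_{e^*}$ and $Y_{e^*}$; set $T'(e^*)=\{e\in S:e^*\in P_e\}$, where $P_e$ denotes the tree path joining the endpoints of $e\in S$. The coordinates of $\phi$ are the $\{0,1\}$-valued indicators of the cuts
\[A_{e^*}=\Bigl\{(v,f):\mathbf{1}[v\in X_{e^*}]+\sum_{e\in T'(e^*)}f(e)\equiv 0\pmod 2\Bigr\},\quad e^*\in E(T),\]
together with $B_e=\{(v,f):f(e)=0\}$ for $e\in S$.

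I would first verify that $\phi$ is $1$-Lipschitz by checking each edge of $\widetilde G$. On a tree-lift $(u,f)\sim(v,f)$ with $uv=e^{**}\in E(T)$, only $A_{e^{**}}$ flips. On an $S$-lift $(u,f)\sim(v,f\oplus\chi_{e^*})$ with $uv=e^*\in S$, in every coordinate $A_{e'}$ the vertex-indicator change $\mathbf{1}[e'\in P_{e^*}]$ cancels the $f$-sum change $\mathbf{1}[e^*\in T'(e')]=\mathbf{1}[e'\in P_{e^*}]$, so only $B_{e^*}$ flips. Expanding for arbitrary $(u,f),(v,g)$ with $\tau:=f\oplus g$ gives
\[\|\phi(u,f)-\phi(v,g)\|_1=|Z|,\]
where $Z:=P_{uv}^T\oplus\bigoplus_{e\in\supp(\tau)}C_e$ is the unique $\mathbb{F}_2$-chain in $E(G)$ with boundary $u+v$ and $S$-support $\tau$ (uniqueness holds because $T$ is acyclic, so every cycle of $G$ is determined by its $S$-support). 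Any walk from $(u,f)$ to $(v,g)$ in $\widetilde G$ projects to a $u$-$v$ walk in $G$ whose mod-$2$ edge multiset equals $Z$, so its length is at least $|Z|$, giving $d_{\widetilde G}((u,f),(v,g))\ge\|\phi(u,f)-\phi(v,g)\|_1$.

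For the reverse inequality I realize $Z$ by an actual walk. The support of $Z$ has odd degree only at $\{u,v\}$, so all but at most one component of $\supp(Z)$ is Eulerian; each Eulerian component contains a cycle of $G$ and therefore has at least $g(G)$ edges. If the number of components is $r$, then $r-1\le|Z|/g(G)$. I then attach each auxiliary component to the trail through the remaining component by a shortest $G$-path of length at most $\diam(G)$, traversed twice so that $Z$ is unchanged modulo $2$. By \eqref{E:GirthMax} the added length is at most $2(r-1)\diam(G)\le(2/c)|Z|$, so the constructed walk has length at most $(1+2/c)|Z|$ and lifts to a $(u,f)$-to-$(v,g)$ walk in $\widetilde G$. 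Hence $\phi$ has distortion at most $1+2/c=O(1)$. The crux of the argument is this last estimate: \eqref{E:GirthMax} is used only here, to balance the cost of reconnecting $\supp(Z)$ against $|Z|$; without it, the auxiliary components could be small and numerous and the bridging cost would dominate.
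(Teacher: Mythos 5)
Your proof is correct, and it takes a genuinely different route from the paper, even though the underlying cut system is the same. Your $B_e$ and $A_{e^*}$ are exactly the cuts the paper produces in Lemma~\ref{L:cuts} (an edge $e\in S$ lies ``between $A$ and $B$'' precisely when $e^*\in P_e$), but from there the arguments diverge. The paper fixes a shortest $xy$-path $P$ in $\widetilde G$, studies the projected walk $\pi(P)$, and through Lemmas~\ref{L:repetitions}, \ref{L:quantity}, \ref{L:length} shows that the over-coverage (edges of $\pi(P)$ traversed twice) is at most a constant fraction of the length. You instead compute $\|\phi(u,f)-\phi(v,g)\|_1$ in closed form as $|Z|$, where $Z=P_{uv}^T\oplus\bigoplus_{e\in\supp\tau}C_e$ is the unique $\mathbb{F}_2$-chain with boundary $u+v$ and $S$-trace $\tau$, and then directly build a competitor walk realizing $Z$ mod $2$ via an Euler trail and shortest-path bridges. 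This avoids the Euler-trail surgery on a given shortest path, avoids the tree-contraction counting of Lemma~\ref{L:quantity}, and yields the slightly better constant $1+2/c$ (the paper's bookkeeping gives $1+6/c$). The one place to tighten your write-up is the counting in the case $u=v$: if $u\notin\supp(Z)$ there is no ``remaining component'' carrying an Euler trail from $u$ to $v$, and you need $r$ bridges rather than $r-1$; but in that case all $r$ components of $\supp(Z)$ are Eulerian, so $r\le|Z|/g(G)$ and the final estimate $2r\,\diam(G)\le(2/c)|Z|$ still holds. A similar remark applies when $u=v\in\supp(Z)$, where you use an Euler circuit at $u$ and $r-1$ bridges. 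With that small case distinction made explicit, the argument is complete.
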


The main steps in our proof are presented as lemmas.

\begin{lemma}\label{L:cuts} For each edge $e\in E(G)$ the set of all edges $\widetilde e\in
E(\widetilde G)$ for which $\pi(\widetilde e)=e$ forms an edge cut
in $\widetilde G$.\end{lemma}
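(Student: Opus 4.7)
The plan is to exhibit, for each edge $e \in E(G)$, an explicit partition $V(\widetilde G) = A \sqcup B$ such that the edges of $\widetilde G$ crossing from $A$ to $B$ are precisely the fiber $\pi^{-1}(e)$. The construction will depend on whether $e$ is a tree edge or a non-tree edge, and the role of the spanning tree $T$ will be essential.

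First I would handle the easier case $e \in S$. Define
\[
A = \{(v,f) \in V(\widetilde G) : f(e)=0\}, \qquad B = V(\widetilde G)\setminus A.
\]
For any edge $e' \in E(G)$ with $e' \neq e$, the associated bijection (either the identity or the flip at coordinate $e'$) fixes the $e$-coordinate, so both endpoints of a lifted edge above $e'$ agree on $f(e)$ and lie on the same side. On the other hand, the bijection associated to $e$ itself flips $f(e)$, so every lifted edge above $e$ crosses between $A$ and $B$. This gives the desired cut.

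The main obstacle is the case $e \in E(T)$, because in the lift the bijection over $e$ is the identity, so the two endpoints of a lifted edge over $e$ look identical in the second coordinate; we must use the global structure of $T$ and the fundamental cut of $e$. Removing $e$ from $T$ splits $V(G)$ into two components $V_1, V_2$, and I let $S_e \subseteq S$ be the set of non-tree edges with one endpoint in each component (equivalently, those whose fundamental cycle uses $e$). I then define
\[
\phi(v,f) = \Big(\sum_{s\in S_e} f(s)\Big) + \mathbf{1}_{v\in V_2} \pmod 2,
\]
and take $A = \phi^{-1}(0)$, $B = \phi^{-1}(1)$.

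The remaining work is to verify, edge-by-edge, that $\phi$ changes exactly across the fiber $\pi^{-1}(e)$. For a lifted edge over $e$ itself the first coordinate switches components while the second is unchanged, so $\phi$ flips. For a lifted edge over a tree edge $e' \neq e$, both endpoints lie in the same component $V_1$ or $V_2$ and share the same second coordinate, so $\phi$ agrees. For a lifted edge over a non-tree edge $e' \in S$, the parity contribution from $f(e')$ changes iff $e' \in S_e$, and the $V_1/V_2$ contribution changes iff $e'\in S_e$; these two effects cancel, so $\phi$ agrees. This exhausts all cases and proves that $\pi^{-1}(e)$ is an edge cut. The key ingredient beyond bookkeeping is the classical fact that the fundamental cut of a tree edge consists exactly of that edge together with the non-tree edges whose fundamental cycles traverse it.
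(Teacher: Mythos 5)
Your proposal is correct and takes essentially the same approach as the paper: the easy case $e\in S$ is handled by the coordinate function $f\mapsto f(e)$, and for tree edges the partition by $\phi(v,f)=\sum_{s\in S_e}f(s)+\mathbf{1}_{v\in V_2}\pmod 2$ is exactly the paper's sets $P_1,P_2$ written as the two level sets of an explicit parity function. You have merely spelled out the edge-by-edge verification that the paper leaves as ``easy to check.''
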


\begin{proof} The statement is simple for $e\in S$. In this case
it is quite easy to describe the sets separated by the cut: they
are the sets $V(G)\times A_{e,0}$ and $V(G)\times A_{e,1}$ where
$A_{e,0}$ and $A_{e,1}$ are the sets of functions in $\{0,1\}^S$
whose values on $e$ are equal to $0$ and $1$,
respectively.\medskip

As for edges corresponding to the tree we have the following: an
edge $e\in E(T)$ splits the tree $T$ into two components, call
them $A$ and $B$. The edges of $S$ are either within one of the
components $A$ and $B$, or between them. We consider two sets of
vertices:
\medskip

The set $P_1$ consisting of pairs $(v,f)$ where either $v\in A$
and the sum of values of $f$ corresponding to edges in $S$ passing
from $A$ to $B$ (recall that $e\notin S$) is even, or $v\in B$ and
the sum of values of $f$ corresponding to edges in $S$ passing
from $A$ to $B$ is odd.
\medskip

The set $P_2$ consisting of pairs $(v,f)$ where either $v\in A$
and the sum of values of $f$ corresponding to edges in $S$ passing
from $A$ to $B$ is odd, or $v\in B$ and the sum of values of $f$
corresponding to edges in $S$ passing from $A$ to $B$ is
even.\medskip

It is easy to check that edges connecting $P_1$ and $P_2$ in
$\widetilde G$ are those and only those edges whose projection to
$E(G)$ is $e$.
\end{proof}

\begin{remark} In \cite[Lemma 3.3]{AGS11+} it was observed that the more difficult part in the proof of  Lemma
\ref{L:cuts} follows from the easier part and a certain
universality result on graph lifts.
\end{remark}

Now we are ready to introduce an embedding $F$ of $V(\widetilde
G)$ into $\mathbb{R}^{E(G)}$ (the set of real-valued functions on
$E(G)$), which has the desired property: the distortion of $F$ is
bounded above by a universal constant if we endow
$\mathbb{R}^{E(G)}$ with its $\ell_1$-norm.\medskip

For each edge cut $R(e)$ defined by the set of edges $\widetilde
e$ in $\widetilde G$ satisfying $\pi(\widetilde e)=e$, we call one
of the sides of the cut $R(e)$ the {\it $0$-side}, and the other
side the {\it $1$-side} and introduce a $\{0,1\}$-valued function
$h_e$ on $V(\widetilde G)$ given by $h_e(x)=0$ if $x$ is in the
$0$-side of $R(e)$, and by $h_e(x)=1$ if $x$ is in the $1$-side of
$R(e)$.
\medskip

If we endow $\mathbb{R}^{E(G)}$ with its $\ell_1$-norm the
Lipschitz constant of this embedding is $1$. In fact, the cuts
$R(e)$ are disjoint and each edge of $\widetilde G$ is in exactly
one of the cuts. Therefore $||F(x)-F(y)||_1=1$ if $x$ and $y$ are
adjacent vertices of $\widetilde G$.\medskip

To estimate the Lipschitz constant of $F^{-1}$ we consider $x,y\in
V(\widetilde G)$ and observe the following:

\begin{observation}
If $P$ is a path between $x$ and $y$ in $\widetilde G$, then
$d_{\widetilde G}(x,y)\le \hbox{\rm length}(P)$ and
$||F(x)-F(y)||_1$ is the number of edges in the walk $\pi(P)$
which are repeated in the walk an odd number of times.
\end{observation}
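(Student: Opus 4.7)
The first inequality is immediate: $d_{\widetilde G}(x,y)$ is by definition the length of a shortest $x$-$y$ path in $\widetilde G$, and $P$ is one such path. The $\ell_1$ identity is the substantive part.

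My plan is to evaluate the norm coordinatewise and translate each coordinate into a cut-crossing count via Lemma \ref{L:cuts}. Since each component function $h_e$ of $F$ is $\{0,1\}$-valued,
\[
\|F(x)-F(y)\|_1 \;=\; \sum_{e\in E(G)} |h_e(x)-h_e(y)| \;=\; \left|\{e\in E(G):\, h_e(x)\ne h_e(y)\}\right|.
\]
By the definition of $h_e$, the condition $h_e(x)\ne h_e(y)$ is equivalent to $x$ and $y$ lying on opposite sides of the edge cut $R(e)$ supplied by Lemma \ref{L:cuts}.

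Next I would invoke the standard cut-crossing parity observation: a walk in a graph has its endpoints on opposite sides of an edge cut if and only if it uses an odd number of edges of that cut. This is proved by a one-line induction on length, since traversing a cut edge switches the side while traversing any other edge preserves it. Applied to $P$ and $R(e)$, this gives that $h_e(x)\ne h_e(y)$ iff $P$ contains an odd number of edges of $R(e)$.

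Finally, by Lemma \ref{L:cuts} the cut $R(e)$ is exactly the set of $\widetilde e\in E(\widetilde G)$ with $\pi(\widetilde e)=e$, so the number of edges of $P$ lying in $R(e)$ equals the number of times $e$ occurs in the walk $\pi(P)$. Combining the three reductions, $h_e(x)\ne h_e(y)$ iff $e$ appears in $\pi(P)$ an odd number of times, and summing over $e$ yields the claimed identity. The only nontrivial ingredient is Lemma \ref{L:cuts}; everything else is bookkeeping between edges of $P$, edges of cuts in $\widetilde G$, and edges of the projected walk, so I do not anticipate any real obstacle.
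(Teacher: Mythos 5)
Your proof is correct and is exactly the argument the paper implicitly relies on; the paper states this as an unproved Observation, so there is no ``paper proof'' to diverge from. Your coordinatewise reduction to the cut-crossing parity fact, together with the identification of $R(e)$ with the fiber $\pi^{-1}(e)$ from Lemma \ref{L:cuts}, is the natural and intended filling-in of the details.
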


This observation shows that we can prove the statement about the
distortion if, for each pair $x,y$ of vertices in $\widetilde G$
we can show that a shortest $xy$-path $P$ in $\widetilde G$ is
such that sufficiently many of the edges in the walk $\pi(P)$ are
repeated only one time and other edges are repeated at most two
times.\medskip

Remark \ref{R:backtrack} (on backtracking) implies that a vertex
in the subgraph of $G$ induced by edges of $\pi(P)$ can have
degree $1$ if and only if it is the projection of either the
beginning or the end of the path $P$.\medskip

\begin{lemma}\label{L:repetitions} Let $x,y\in V(\widetilde G)$ and let $P$ be a shortest $xy$-path.
Let $I(P)$ be the subgraph of $G$ induced by edges of $\pi(P)$.
Only cut edges of $I(P)$ can be repeated in the walk $\pi(P)$. Cut
edges of $I(P)$ cannot be repeated in the walk $\pi(P)$ more than
twice.
\end{lemma}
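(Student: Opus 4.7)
The plan is to reinterpret the question as one about walks in $G$ with a prescribed signature. By Remark~\ref{R:WalkLifting}, every walk $W$ in $G$ starting at a vertex $u$ has, for each choice of initial lift $(u,\ell_0)\in V(\widetilde G)$, a unique lift of the same length in $\widetilde G$, which terminates at $(v,\ell_0+\tau_W)$, where $v$ is the terminal vertex of $W$ and $\tau_W\in\{0,1\}^S$ is the parity vector with $\tau_W(f)=m_W(f)\bmod 2$ for $f\in S$ (here $m_W(f)$ denotes the number of traversals of $f$ by $W$); indeed, each tree edge preserves the level, while each edge $f\in S$ flips coordinate $f$. Writing $x=(u,\ell_0)$ and $y=(v,\ell_1)$, it follows that $W=\pi(P)$ is a shortest $u$-$v$ walk in $G$ satisfying $\tau_W=\ell_0+\ell_1$; any shorter $u$-$v$ walk with the same signature will lift to a shorter $xy$-walk in $\widetilde G$ and contradict the minimality of $P$.

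The principal tool is a \emph{reversal trick}. Suppose $W$ traverses some edge $e=uv$ twice in the same direction, say both $u\to v$. Decomposing $W=W_1\cdot e\cdot W_{\mathrm{mid}}\cdot e\cdot W_2$ with $W_{\mathrm{mid}}$ a $v$-to-$u$ sub-walk, the walk $W_1\cdot\overline{W_{\mathrm{mid}}}\cdot W_2$ (with $\overline{W_{\mathrm{mid}}}$ the reverse sub-walk, going $u$ to $v$) goes from $\pi(x)$ to $\pi(y)$, is two edges shorter, and has signature $\tau_W$ (the two eliminated $e$-crossings together contributed $2\mathbf{1}_{e\in S}\mathbf{1}_e=0$ in $\mathbb{F}_2$, and reversal preserves edge-multiplicities). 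This contradicts minimality, so in $W$ no two uses of $e$ share a direction; as there are only two directions, $e$ is used at most twice in $W$, proving the second sentence of the lemma.

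For the first sentence, suppose $e=uv$ is non-cut in $I(P)$ and yet $W$ uses $e$ twice. By the preceding step the two uses are in opposite directions, so $W=W_1\cdot e\cdot W_2\cdot e\cdot W_3$ with $W_2$ a closed sub-walk at $v$ whose edges all lie in $I(P)\setminus\{e\}$. I consider whether $V(W_2)\cap V(W_1\cup W_3)=\emptyset$. If so, then no edge of $I(P)\setminus\{e\}$ joins $V(W_2)\ni v$ to $V(W_1\cup W_3)\ni u$ (every such edge appears in some $W_i$ and thus lies in the corresponding vertex set), so $e$ is a cut edge of $I(P)$, contradicting the non-cut hypothesis. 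Otherwise choose a shared vertex $w$, split $W_1$ (say) at a visit to $w$ into $W_1^{(a)}\cdot W_1^{(b)}$, and cyclically rotate $W_2$ to a closed walk $W_2'$ at $w$. Then $W_1^{(a)}\cdot W_2'\cdot W_1^{(b)}\cdot W_3$ is two edges shorter than $W$ and carries the same signature (rotation preserves $\tau_{W_2}$, and the eliminated $e$-crossings cancel), again contradicting minimality.

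The principal difficulty is this last case: when a non-cut edge is used twice in opposite directions, no one-step length shortening is visible, and the argument relies on the combinatorial dichotomy above to either reroute via a cyclic reinsertion or conclude that $e$ is, after all, a cut edge of $I(P)$.
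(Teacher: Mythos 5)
Your proof is correct, and it takes a genuinely different route from the paper's. The paper passes to the multigraph $N(P)$ (parallel edges recording multiplicities in $\pi(P)$), observes that $N(P)$ has an Euler trail from $\pi(x)$ to $\pi(y)$, deletes a pair of parallel copies of a non-cut (resp.\ thrice-repeated) edge, notes that connectivity and degree parities survive, and lifts the resulting shorter Euler trail using the same mod-$2$ invariance of the $L$-coordinate that you isolate as the ``signature'' $\tau_W$. You avoid Euler trails entirely and do direct walk surgery: a reversal trick kills any pair of same-direction traversals (giving the bound of two in one stroke, and in fact for \emph{all} edges, not just cut edges), and for a non-cut edge traversed twice in opposite directions you either reinsert the enclosed closed sub-walk $W_2$ by cyclic rotation at a shared vertex or deduce, from the absence of such a vertex, that $e$ is a cut edge after all. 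Your Case~1 is essentially a hand-rolled replacement for the connectivity step that the paper gets for free from ``$e$ is not a cut edge $\Rightarrow$ $N'$ is connected.'' The tradeoff: your argument is more self-contained (no appeal to the Euler-trail characterization) and makes the mechanism of shortening very concrete, at the cost of a case split (same-direction / opposite-direction, shared vertex / no shared vertex) that the paper's Euler-trail deletion handles uniformly. Both arguments hinge on precisely the same structural feature of this particular lift, namely that the $L$-increment of an edge depends only on $\pi(e)$ and not on direction or current position, so only multiplicities mod $2$ matter.
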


\begin{proof} It is convenient to consider a non-simple graph
$N(P)$ having $I(P)$ as its underlying simple graph and having as
many parallel edges for each edge of $I(P)$, as many times the
edge is repeated in $\pi(P)$. It is easy to see that $N(P)$ has an
Euler trail starting at $\pi(x)$ and ending at $\pi(y)$ (we just
follow the projection $\pi(P)$, using different parallel edges
instead of repeating edges of the underlying graph).
\medskip

To prove the first statement of the lemma, we assume the contrary,
that is, there is an edge $e$ in $I(P)$ which is not a cut edge,
but is repeated in $\pi(P)$. This assumption implies that if we
delete from $N(P)$ two edges parallel to $e$, the result, which we
denote $N'$, will still be a connected graph.
\medskip

By the well-known characterization of graphs having Euler trails,
degrees of all vertices of $N(P)$, except possibly $\pi(x)$ and
$\pi(y)$, are even. It is clear that this condition still holds
for the degrees of $N'$. Using the well-known characterization of
graphs having Euler trails again, we get that the remaining graph
contains an Euler trail which starts at $\pi(x)$ and ends at
$\pi(y)$.
\medskip

We claim that the lift of this trail, if we start the lift at $x$,
will end at $y$, thus giving a shorter $xy$-path and leading to a
contradiction.
\medskip

To prove the claim, we observe that our construction of the lift
of $G$ and our definition of a lifted walk (see Remark
\ref{R:WalkLifting}) are such that the change in the
$L$-coordinate in each step (when we walk along the lifted walk)
is made only in one value of the corresponding $\{0,1\}$-valued
function on $S$, the choice of this coordinate depends only on the
$\pi$-projection of the edge which we are passing, and not on the
direction in which we pass it, or on the $L$-coordinate of the
vertex we are at (this is a very important property of the graph
lift which we consider). Also, we need an obvious observation that
if we change some value of a $\{0,1\}$-valued function twice, it
returns to its original value. Hence the total change in the
$L$-coordinate as we walk along the lift of the Euler trail of
$N'$ is the same as for the original Euler trail in $N(P)$
(formally speaking, we need to replace Euler trails in $N'$ and
$N(P)$ by the corresponding walks in the underlying simple graph
$I(P)$). Hence we end up at $y$.
\medskip

We can get a contradiction in the same way if we assume that some
of the edges in $\pi(P)$ are repeated more than twice. Proving the
first statement we used the assumption that $e$ is not a cut edge
only once: when we claimed that removing two copies of $e$ from
$N(P)$ we get a connected graph. For the second statement we use
the following trivial observation instead: if there is a triple of
parallel edges in $N(P)$, deletion of two of them does not
disconnect the graph.
\end{proof}

Lemma \ref{L:repetitions} shows that to complete the proof of the
theorem it remains to show that the number of cut edges in $I(P)$
(where $P$ is a shortest $xy$-path) which are repeated twice in
$\pi(P)$ cannot be much larger than the number of the remaining
edges. To show this we consider two types of subgraphs in  $I(P)$:

\begin{itemize}

\item[{\bf (i)}] Maximal $2$-edge-connected subgraphs. Let $C$ be
the number of such subgraphs.

\item[{\bf (ii)}] Maximal subgraphs satisfying the conditions: (1)
They are paths; (2) All internal vertices of these paths (if any)
have degree $2$ in $I(P)$; (3) All edges of these paths are cut
edges of $I(P)$. Let $N$ be a number of such subgraphs.

\end{itemize}

\begin{observation}\label{O:card(i)} The number of edges in each subgraph of the type {\bf (i)}
is at least $g(G)$. This statement is easy to see, because each
such subgraph contains a cycle in $G$.
\end{observation}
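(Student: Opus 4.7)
The plan is a two-line argument resting on the definition of girth together with a standard structural fact about 2-edge-connected graphs. Concretely, I would show that every type-(\textbf{i}) subgraph contains a cycle of $G$, after which the bound on the number of edges is immediate.

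Let $H$ be a maximal $2$-edge-connected subgraph of $I(P)$. By definition, $H$ is connected, has at least one edge, and has no cut edges. Pick any edge $e = uv$ of $H$. Since $e$ is not a cut edge of $H$, the graph $H - e$ is still connected, so there is a $u$--$v$ path $Q$ in $H - e$. Concatenating $Q$ with the edge $e$ produces a cycle contained in $H$.

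Because $H$ is a subgraph of $I(P)$, and $I(P)$ is a subgraph of $G$, this cycle is a cycle of $G$. By the definition of the girth, it must have length at least $g(G)$. Therefore $H$ has at least $g(G)$ edges, which is exactly the statement of the observation.

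There is essentially no obstacle here; the only convention one has to fix is that a ``$2$-edge-connected subgraph'' is understood to contain at least one edge (and hence at least two vertices), which is the reading consistent with the rest of the paper's analysis of $I(P)$.
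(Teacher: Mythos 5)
Your argument is correct and is exactly the justification the paper has in mind: a $2$-edge-connected subgraph contains a cycle, and any cycle in $G$ has length at least $g(G)$. You have merely spelled out the standard fact (removing a non-cut edge leaves a path between its endpoints, closing up to a cycle) that the paper leaves implicit.
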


\begin{lemma}\label{L:quantity} $N\le 2C+1$.
\end{lemma}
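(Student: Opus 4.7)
The plan is to reduce the claim to a standard counting identity on a certain auxiliary tree attached to $I(P)$. First I would contract each of the $C$ maximal $2$-edge-connected subgraphs of $I(P)$ to a single point, obtaining a tree $T^*$; call the $C$ contracted vertices \emph{special}, and the remaining vertices of $T^*$ \emph{ordinary}. The edges of $T^*$ are exactly the cut edges of $I(P)$, and an ordinary vertex has the same degree in $T^*$ as in $I(P)$, since all its incident edges are cut edges.

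Next I would observe that a maximal subgraph of type (ii) is precisely a maximal path in $T^*$ whose internal vertices are ordinary and of degree $2$, and that such paths partition $E(T^*)$: each cut edge extends uniquely in both directions through ordinary degree-$2$ vertices until it meets a \emph{hub}, defined to be either a special vertex or an ordinary vertex of degree $\ne 2$. Suppressing all ordinary degree-$2$ vertices of $T^*$ therefore yields a tree $T^{**}$ whose $H$ vertices are the hubs and whose edges correspond bijectively to the maximal subgraphs of type (ii); in particular $N = H - 1$.

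Write $D_1, D_3$ for the numbers of ordinary vertices of $I(P)$ of degree $1$ and of degree $\ge 3$, and $S_1, S_2, S_3$ for the numbers of special vertices of $T^*$ of degrees $1$, $2$, and $\ge 3$ respectively. Then $C = S_1+S_2+S_3$ and $H = (D_1+S_1)+S_2+(D_3+S_3) = C + D_1 + D_3$. Applying the elementary tree identity $(\text{\# leaves}) = 2 + \sum_{v:\,\deg v \ge 3}(\deg v - 2)$ to $T^{**}$ gives $D_1 + S_1 \ge D_3 + S_3 + 2$. Since the paragraph preceding Lemma~\ref{L:repetitions} (a direct consequence of Remark~\ref{R:backtrack}) records that the only ordinary degree-$1$ vertices of $I(P)$ are $\pi(x)$ and $\pi(y)$, we have $D_1 \le 2$. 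Substituting yields $D_3 \le S_1 \le C$, hence $D_1 + D_3 \le C+2$, and finally $N = C + D_1 + D_3 - 1 \le 2C+1$.

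The one step that deserves real care is the bijection between maximal subgraphs of type (ii) and edges of $T^{**}$: one must check that the extension procedure is well-defined and partitions the cut edges of $I(P)$, and that $T^{**}$ is indeed a tree so that $N = H - 1$. Both follow once one notes that at an ordinary vertex every incident edge is a cut edge (so ``degree $2$ in $I(P)$'' and ``degree $2$ in $T^*$'' coincide for ordinary vertices), together with the standard fact that suppressing degree-$2$ vertices in a tree again yields a tree; the small cases $H \le 1$ give $N = 0$ and are immediate.
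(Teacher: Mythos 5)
Your proof is correct, and it is essentially the same leaf-counting argument the paper uses: contract each maximal $2$-edge-connected component to a point, obtain a tree, suppress degree-$2$ vertices so that the maximal type-(ii) paths become edges, and then bound the number of those edges via the standard degree-sum identity for trees together with the fact that $I(P)$ has at most two vertices of degree $1$ (namely $\pi(x)$ and $\pi(y)$, by Remark~\ref{R:backtrack}). The one genuine technical difference is in the choice of auxiliary tree: the paper suppresses \emph{all} degree-$2$ vertices of the contracted tree, including the contracted images of $2$-edge-connected components that happen to have degree $2$; this over-merges some type-(ii) paths, and the paper must reintroduce a correction term $H$ (your $S_2$) to repair the count. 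You instead suppress only the \emph{ordinary} degree-$2$ vertices, so the special vertices survive as hubs of $T^{**}$ and the bijection $N = H - 1$ between edges of $T^{**}$ and type-(ii) subgraphs holds exactly, with no correction. This makes the bookkeeping a bit cleaner: your chain $D_3 \le S_1 \le C$ and $D_1 \le 2$ replaces the paper's $2C \ge 2(J-2)+2H \ge N-1$. Both give $N\le 2C+1$; yours is slightly more transparent, the paper's uses one fewer auxiliary notion (no ordinary/special dichotomy among degree-$2$ vertices, just a single correction count). You are also right to flag the degenerate case $H\le 1$, where the leaf-identity does not apply but $N=0$ makes the bound trivial.
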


\begin{proof} We contract each maximal $2$-edge-connected subgraph to a
vertex, and denote the obtained graph by $D_1$. It is clear that
$D_1$ is a tree, and each subgraph of type {\bf (ii)} is mapped
into $D_1$ isomorphically, and has properties (A) It is a path;
(B) All internal vertices of this path (if any) have degree $2$ in
$D_1$; (C) All edges of these paths are cut edges of $D_1$ (these
properties are analogues of (1), (2), and (3) described in {\bf
(ii)} for $D_1$). On the other hand, maximality can be lost. The
maximality is lost in the cases where there is a maximal
$2$-edge-connected subgraphs of $I(P)$ incident with exactly $2$
cut edges of $I(P)$. Denote the number of such maximal
$2$-edge-connected subgraphs by $H$.
\medskip

Denote by $N_1$ the number of maximal subgraphs in $D_1$ having
properties (A), (B), and (C), the previous paragraph implies that
we have $N-N_1=H$.
\medskip

Observe that, by Remark \ref{R:backtrack} (on backtracking), all,
except possibly two, of leaves of $D_1$ correspond to maximal
$2$-edge-connected subgraphs, so we need to estimate the number of
these leaves, let us denote it by $J$.\medskip

One of the ways to do this is to replace all paths with internal
vertices of degree $2$ in $D_1$ by edges, and denote the obtained
tree by $D_2$. The number of edges in $D_2$ is equal to $N_1$, and
all vertices in $D_2$ which are not leaves have degrees at least
$3$. Also $D_1$ and $D_2$ have the same number of leaves.\medskip

So we need to estimate from below the number $J$ of leaves in a
graph with $N_1$ edges and all vertices which are not leaves
having degrees at least $3$. Counting the sum of all degrees of
$D_2$ in two ways we get
\[2N_1\ge 3(N_1-J+1)+J,
\]
or $2J\ge N_1+3$. We have $2C\ge 2(J-2)+2H\ge 2J+H-4\ge
N_1+H-1=N-1$, which is the desired inequality.
\end{proof}

\begin{lemma}\label{L:length} Let $K$ be a path in $I(P)$ satisfying the conditions: {\rm (a)} All of its internal
vertices have degree $2$ in $I(P)$; {\bf (b)} Each of its edges is
repeated twice in the walk $\pi(P)$, where $P$ is a shortest
$xy$-path in $\widetilde G$ $(x,y\in V(\widetilde G))$. Then the
length of $K$ is $\le\diam G$.
\end{lemma}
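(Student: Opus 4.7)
The plan is to argue by contradiction: I would show that if $K$ were longer than $\diam G$, one could produce an $xy$-walk in $\widetilde G$ strictly shorter than $P$, contradicting the minimality of $P$.

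First I would exploit the hypothesis that $P$ is a (simple) path: by Remark~\ref{R:backtrack} the projection $\pi(P)$ then contains no backtrack. Combined with condition~(a), this forbids any turn-around at an internal vertex of $K$, because the only other incident edge of $I(P)$ at such a vertex would force an immediate backtrack. Consequently every maximal subwalk of $\pi(P)$ whose edges lie entirely in $K$ must enter and leave at the endpoints $v_0,v_\ell$ of $K$ and must traverse all of $K$ from one end to the other. Since condition~(b) contributes exactly $2\ell$ edge-traversals from $K$ in total, $\pi(P)$ contains precisely two such full $K$-subwalks.

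Next, I would let $K'$ be a shortest $v_0v_\ell$-path in $G$, so that $|K'|\le\diam G$, and form a new walk $W$ from $\pi(x)$ to $\pi(y)$ by replacing each of the two $K$-subwalks inside $\pi(P)$ by a traversal of $K'$ in the matching direction. Because $K$ and $K'$ share their endpoints, the splice is a genuine walk in $G$, of length $|W|=|\pi(P)|-2\ell+2|K'|$.

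The crucial step, which is in my view the main obstacle, is to verify that the lift of $W$ starting at $x$ still ends at $y$. Here I would use the very specific form of our lift (identity on tree edges, bit-flip on non-tree edges), stressed in the proof of Lemma~\ref{L:repetitions}: the endpoint of a lifted walk depends only on the start and on the parity vector in $\{0,1\}^S$ that records, for each $e\in S$, the parity of the number of occurrences of $e$ in the walk. Comparing $W$ to $\pi(P)$, the traversal count of every $e\in S$ changes by an even number ($-2$ if $e\in K\setminus K'$, $+2$ if $e\in K'\setminus K$, and $0$ otherwise), so the parity vector is preserved and the lift of $W$ from $x$ indeed ends at $y$. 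Minimality of $P$ then gives $|P|\le|W|=|P|+2(|K'|-\ell)$, whence $\ell\le|K'|\le\diam G$. A short parity check at the start/end vertices shows that $\pi(x)$ or $\pi(y)$ can be an internal vertex of $K$ only in the degenerate situation $\pi(x)=\pi(y)$, and that sub-case is handled by the same replacement idea applied to the two partial $K$-subwalks incident to that vertex.
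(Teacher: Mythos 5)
Your proposal is correct and follows essentially the same route as the paper: replace each traversal of $K$ inside $\pi(P)$ by a shorter path $K'$ with the same endpoints, then use the fact that the endpoint of a lifted walk depends only on the parity of the number of occurrences of each non-tree edge to conclude that the lift of the modified walk still terminates at $y$. You are somewhat more careful than the paper in pinning down (via Remark~\ref{R:backtrack} and condition~(a)) that $\pi(P)$ contains exactly two full end-to-end $K$-traversals, and in flagging the degenerate case $\pi(x)=\pi(y)$ an internal vertex of $K$, but the core idea is identical.
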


\begin{proof} Let $u,v$ be the ends of $K$. If the length of  $K$ is more than
$\diam G$, then there is a strictly shorter $uv$-path $K'$ in $G$,
we are going to use this path to construct a shorter than $P$ path
in $\widetilde G$ joining $x$ and $y$. We do this in the most
straightforward way: first we modify the walk $\pi(P)$ in the
following way: each time when we walk through $K$, we walk through
$K'$ instead. It remains to show that if we lift this walk to
$\widetilde G$, we get another $xy$-walk.\medskip

In fact, this new walk clearly starts at $\pi(x)$ and ends at
$\pi(y)$. We need only to check that the $L$-coordinate at the end
of the walk will be the same as for the original walk. This
follows immediately from the observation that we made earlier: if
we walk through two edges with the same $\pi$-projection twice,
the corresponding changes in the $L$-coordinate cancel each other.
Since this happens for each edge of both $K$ and $K'$, the
$L$-coordinates corresponding to $\pi(y)$ at the end of the walks
will be the same for lifts of both walks. This proves the lemma.
\end{proof}

\begin{proof}[Proof of Theorem \ref{T:main}] We consider two cases
separately:
\medskip

Case 1. $C=0$. By Lemma \ref{L:quantity}, $N=1$ in this case. We
get, by Remark \ref{R:backtrack} (on backtracking), that each edge
of $I(P)$ is used in the walk $\pi(P)$ exactly once, therefore
$d_{\widetilde G}(x,y)={\rm length}(P)=||F(x)-F(y)||_1$ in this
case.\medskip

Case 2. $C>0$. Let $M_1$ be the number of cut edges in $I(P)$
which are used once in the walk $\pi(P)$. Let $M_2$ be the number
of edges of $I(P)$ which are in $2$-edge-connected components of
$I(P)$. Let $M_3$ be the number of cut edges of $I(P)$ which are
used twice in the walk $\pi(P)$. We have

\[||F(x)-F(y)||_1= M_1+M_2.\]

On the other hand,

\[d_{\widetilde G}(x,y)=M_1+M_2+2M_3.\]

In addition, by Observation \ref{O:card(i)}, and
\eqref{E:GirthMax} we have $M_2\ge C\cdot g(G)\ge C\cdot c\cdot
\diam(G)$. On the other hand, by Lemma \ref{L:length}, we have
$M_3\le N\cdot\diam(G)\le 3C\diam (G)$ (if $C\ge1$). Therefore
$M_3/M_2\le3/c$, and the quotient $d_{\widetilde
G}(x,y)/||F(x)-F(y)||$ is bounded above by a universal constant.
\end{proof}

\begin{remark}[Remark on applications to coarse embeddings] Since
$\ell_1$ admits a coarse embedding into a Hilbert space (see
\cite[Corollary 3.1]{Nao10}), Theorem \ref{T:main} implies that
the graphs $\widetilde G_n$ admit uniformly coarse embeddings into
a Hilbert space. Therefore, combining our Theorem \ref{T:main}
with a recent result of Willett \cite{Wil11}, we get more examples
of metric spaces with bounded geometry but without property A,
admitting coarse embeddings into a Hilbert space (first examples
of this type were found in \cite{AGS11+}). (It is worth mentioning
that without the bounded geometry condition such examples were
known earlier \cite{Now07}.)
\medskip

Also, it is worth mentioning that in \cite{Ost09} it was proved
that locally finite metric spaces which do not admit coarse
embeddings into a Hilbert space contain substructures which are
``locally expanding'' (see \cite{Ost09} for details). Our example,
as well as the example in \cite{AGS11+}, show that the converse it
false, since families of graphs with constant degree $\ge 3$ and
indefinitely growing girth are ``locally expanding'' in the sense
of \cite{Ost09}.
\end{remark}

The work on this paper started when the author was a participant
of the Workshop in Analysis and Probability at Texas A \&\ M
University.  The author would like to thank Florent Baudier, Ana
Khukhro, and Piotr Nowak for useful conversations related to the
subject of this paper during the workshop and to thank Nati Linial
and Doron Puder for helpful criticism of the first version of the
paper.

\end{large}

\begin{small}

\end{small}
\medskip

\noindent{\sc Department of Mathematics and Computer Science\\
St. John's University\\ 8000 Utopia Parkway, Queens, NY 11439,
USA}\\
e-mail: {\tt ostrovsm@stjohns.edu}

\end{document}